
\documentclass[12pt]{amsart}

\usepackage{amssymb,amscd, easybmat, MnSymbol}
\usepackage{amsbsy,amsmath,amsfonts,amsthm,extarrows,bm}
\usepackage[english]{babel}
\usepackage{graphicx}
\usepackage[all]{xy}
\usepackage{xfrac}

\usepackage[T1]{fontenc}

\usepackage{xr}
\usepackage[bookmarksopen=false,breaklinks=true,backref=page,pagebackref=true,plainpages=false, hyperindex=true,pdfstartview=FitH,colorlinks=true, pdfpagelabels=true,linkcolor=blue,citecolor=red,urlcolor=red,hypertexnames=false]{hyperref}


\topmargin -1.5cm 
\headheight 2cm \headsep 1cm
\textwidth 14cm 
\oddsidemargin 0.5cm 
\evensidemargin 0.5cm 
\textheight 20 cm 
\footskip 1cm

\newtheorem{theorem}{Theorem}[section]
\newtheorem{lemma}[theorem]{Lemma}
\newtheorem{corollary}[theorem]{Corollary}
\newtheorem{proposition}[theorem]{Proposition}
\newtheorem{remark}[theorem]{Remark}

\newtheorem{definition}[theorem]{Definition}

\newtheorem{Atheorem}{Theorem}[section]

\newtheorem{Aproposition}[Atheorem]{Proposition}

\newenvironment{sproof}[1]
{\begin{proof}[#1]} {\end{proof}}

\newcommand{\ov}[1]{\overline{#1}}


\newcommand{\Z}{\mathbb Z}

\newcommand{\Q}{\mathbb Q}


\newcommand{\E}{\operatorname{E}}
\newcommand{\GE}{\operatorname{GE}}
\newcommand{\Um}{\operatorname{Um}}
\newcommand{\SL}{\operatorname{SL}}
\newcommand{\K}{\operatorname{K}}
\newcommand{\SK}{\operatorname{SK}}
\newcommand{\sr}{\operatorname{sr}}

\newcommand{\Jac}{\operatorname{Jac}}
\newcommand{\St}{\operatorname{St}}

\newcommand{\ip}{\mathfrak{p}}
\newcommand{\ia}{\mathfrak{a}}
\newcommand{\ord}{\operatorname{ord}}

\makeatletter
\def\paragraph{\@startsection{paragraph}{4}%
  \z@\z@{-\fontdimen2\font}%
  {\normalfont\bfseries}}
\makeatother

\makeatletter
\@namedef{subjclassname@2020}{%
  \textup{2020} Mathematics Subject Classification}
\makeatother


\title{The stable rank of $\mathbb{Z}[x]$ is $3$}
\email{luc.guyot.ge@gmail.com}
\author{Luc Guyot}
\keywords{Bass stable rank; univariate polynomial ring; algebraic K-theory; special Whitehead group; relative sequence; power residue symbols; quadratic integers} 
\date{\today}
\subjclass[2020]{Primary 13D15, Secondary 13B25, 11A5, 11R04}

\begin{document}
\pagestyle{plain}
\maketitle
\begin{abstract}
Let $\mathbb{Z}[x]$ be the ring of univariate polynomials over $\Z$ and denote by 
$\sr(\mathbb{Z}[x])$ its stable rank in the sense of Bass.
Grunewald, Mennicke and Vaserstein proved that $$\sr(\mathbb{Z}[x]) = 3.$$ 
As the inequality $\sr(\mathbb{Z}[x]) \le 3$ follows immediately from Bass's stable range theorem, 
the above identity is equivalent to the existence of a non-stable unimodular row of size $3$.
This note addresses minor errors found in the existing proof of the latter fact in the literature.
Using the same methods, we show that 
the unimodular row $(3, x + 1, x^2 + 16)$ is not stable.
\end{abstract}

Rings are assumed to be commutative and unital. Let $R$ be a ring.
A row $(r_1,\dots, r_n) \in R^n$ is \emph{unimodular} if $\sum_i R r_i = R$, 
or equivalently, if the ideal generated by $r_1, \dots, r_n$ is $R$. 
We denote by $\Um_n(R)$ the set of unimodular rows of size $n$. 
A row $(r_1,\dots, r_{n + 1}) \in \Um_{n + 1}(R)$ ($n > 0$) is \emph{stable} if there is 
$(s_1, \dots, s_n) \in R^n$ such that $$(r_1 + s_1 r_{n + 1}, \dots, r_{n} + s _nr_{n + 1})$$ belongs to $\Um_n(R)$. 
An integer $n > 0$ lies in the \emph{stable range} of $R$ if every row in $\Um_{n + 1}(R)$ is stable. 
If $n$ lies in the stable range of $R$, then so does $k$ for every $k > n$ \cite[Lemma 11.3.3]{McCR87}.
The \emph{Bass stable rank $\sr(R)$ of $R$} is the least integer in the stable range of $R$. 
This rank is key in several direct sum cancellation results \cite[Theorems 4.26 and 4.28]{Mag02}; 
stably free $R$-modules of stably free rank at least $\sr(R)$ are free \cite[Corollary 4.23]{Mag02}. 
Bass's stable rank is also used to simplify the computation of the algebraic $K$-groups $K_n(R)$ through 
surjectivity and injectivity theorems. For instance, the value of $\sr(R)$ is an upper bound on the order of 
the matrices than can be used to represent elements of $K_1(R)$ 
\cite[Theorem 10.3]{Mag02}.

Grunewald, Mennicke and Vaserstein proved the following:

\begin{Atheorem}(\cite[Proposition 1.9]{GMV94}) \label{ThGMV94}
$\sr(\Z[x]) = 3$.
\end{Atheorem}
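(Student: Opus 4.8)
The plan is to prove the two inequalities $\sr(\Z[x]) \le 3$ and $\sr(\Z[x]) \ge 3$ separately, with essentially all the difficulty in the second. The upper bound is soft: $\Z[x]$ is Noetherian of Krull dimension $2$, so Bass's stable range theorem yields $\sr(\Z[x]) \le \dim \Z[x] + 1 = 3$; equivalently, every row in $\Um_4(\Z[x])$ is stable. For the lower bound it suffices to produce a single row in $\Um_3(\Z[x])$ that is not stable, since this already shows that $2$ is not in the stable range of $\Z[x]$.

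I would take $v = (1+x,\,12,\,x^2+16)$, which is unimodular because modulo $1+x$ its entries reduce to $0,12,17$ and $\gcd(12,17)=1$. The statement that $v$ is not stable amounts to the assertion that for every $(s,t) \in \Z[x]^2$ the pair $\big(1 + x + s(x^2+16),\ 12 + t(x^2+16)\big)$ fails to lie in $\Um_2(\Z[x])$. The first step is to recast comaximality arithmetically: since every maximal ideal of $\Z[x]$ has the form $(p,h)$ with $h$ irreducible modulo a prime $p$, a pair $(f,g)$ generates the unit ideal if and only if, for every prime $p$, the reductions $\bar f,\bar g$ have no common root in $\overline{\mathbb{F}_p}$. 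Thus the goal becomes: for each fixed $(s,t)$, exhibit a prime $p$ and a point $\alpha \in \overline{\mathbb{F}_p}$ at which both $1+x+s(x^2+16)$ and $12+t(x^2+16)$ vanish.

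The second step narrows the search. At such a common zero $\alpha$ one cannot have $\alpha^2+16 = 0$, since then $1+x$ and $12$ would vanish at $\alpha$ as well, contradicting the unimodularity of $v$; so $\alpha$ avoids the zero locus of $c = x^2+16$, and the two equations determine $s(\alpha)$ and $t(\alpha)$. The heart of the proof is then a number-theoretic case analysis, organized by the reductions of $s,t$ modulo $2$ and $3$ (the primes dividing $12$) and by the factorization of $x^2+16$, producing in each case an explicit $(p,\alpha)$. This is exactly where the constants $12$ and $16$ are engineered: modulo $2$ the second entry becomes $t x^2$, reducing the question to a shared root of $t$ and the first entry, while other configurations are forced onto suitable odd primes --- already for $(s,t)=(0,1)$ the pair is $(1+x,\ x^2+28)$, whose only common zero occurs modulo $29$, at $x \equiv -1$.

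The main obstacle is precisely this uniform control of the universal quantifier over all $(s,t) \in \Z[x]^2$: infinitely many elementary reductions must be covered simultaneously, and no single specialization does the job --- for instance the homomorphism $\Z[x]\to\Z[i]$, $x\mapsto 4i$, kills $x^2+16$ but sends the pair to $(1+4i,12)$, which already generates the unit ideal of $\Z[i]$ since $\gcd(N(1+4i),12)=\gcd(17,12)=1$. Consequently the argument must rest on an exhaustive, prime-by-prime partition of the possible $(s,t)$, and it is the bookkeeping of which prime settles which family that is delicate --- presumably the source of the minor errors this note sets out to correct.
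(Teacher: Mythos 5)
Your upper bound is fine and matches the paper ($\sr(\Z[x]) \le \dim_{\text{Krull}}(\Z[x]) + 1 = 3$), and your reformulation of non-stability is correct: $(1+x,12,x^2+16)$ is not stable iff for every $(s,t) \in \Z[x]^2$ there exist a prime $p$ and $\alpha \in \overline{\mathbb{F}_p}$ at which both $1+x+s(x^2+16)$ and $12+t(x^2+16)$ vanish. But that reformulation is where your proof stops: the ``number-theoretic case analysis'' that is supposed to produce $(p,\alpha)$ for every $(s,t)$ is never carried out, and it is the entire content of the theorem. Worse, the organizing principle you propose cannot work as stated: the reductions of $s$ and $t$ modulo $2$ and $3$ range over the infinite sets $\mathbb{F}_2[x]$ and $\mathbb{F}_3[x]$, so this is not a finite partition of cases, and your one worked example ($(s,t)=(0,1)$, settled by $p=29$) gives no mechanism for choosing $p$ uniformly in $(s,t)$. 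There is a reason no such elementary mechanism is in sight: the obstruction to stability here is a reciprocity-type obstruction, and any prime-by-prime argument would in effect have to reprove it uniformly. So the lower bound --- the only hard part --- is a genuine gap.

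Your dismissal of the specialization $x \mapsto 4i$ also rests on a misunderstanding, and it is exactly the point where your sketch diverges from a workable proof. The image pair $(1+4i,12)$ in $R = \Z[x]/(x^2+16) = \Z + 4i\Z$ is of course unimodular --- it must be, since reductions of unimodular rows are unimodular --- but non-unimodularity of the image is not the relevant obstruction. By Lemma \ref{LemStableRow}, stability of $(1+x,12,x^2+16)$ is equivalent to every matrix $\begin{pmatrix} 1+4i & 12 \\ * & * \end{pmatrix} \in \SL_2(R)$ admitting a lift to $\SL_2(\Z[x])$; the obstruction lives one level deeper than unimodularity, in a lifting problem for $\SL_2$. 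The paper shows this lifting fails: by the Bass--Milnor--Serre theorem the power residue symbol $\left(\frac{12}{1+4i}\right)_2 = -1$ exhibits such a matrix whose class in $\SK_1(R)$ is non-trivial (via $\SK_1(R) \simeq \SK_1(R,4\Z[i]) \twoheadrightarrow \SK_1(\Z[i],4\Z[i]) \simeq \Z/2\Z$), whereas any lift to $\SL_2(\Z[x])$ would force that class to be trivial because $\SK_1(\Z[x]) = 1$. More generally, the paper's proof of $\sr(\Z[x]) \ne 2$ only needs the abstract version of this: if $\sr(\Z[x]) \le 2$ then $\SK_1(\Z[x]) \to \SK_1(\ov{R})$ is surjective for every quotient $\ov{R}$ (Corollary \ref{CorSK1Surjection}), contradicting $\SK_1(\Z + 4i\Z) \ne 1$. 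To repair your proof you would either have to import this $\K$-theoretic input, or find a genuinely new uniform construction of common roots --- which your sketch does not provide.
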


The inequality $\sr(\mathbb{Z}[x]) \le 3$ is provided by Bass's upper bound on the stable rank of finite-dimensional rings. 
Indeed, we have $\sr(\Z[x]) \le \dim_{\text{Krull}}(\Z[x]) + 1$ by Bass's stable range theorem \cite[Corollary 6.7.4]{McCR87} and 
$\dim_{\text{Krull}}(\Z[x]) = 2$  \cite[Proposition 6.5.4]{McCR87}.

The authors of \cite[Proposition 1.9]{GMV94} claimed that the row $(21+ 4x, 12, x^2 + 20)$ is unimodular but not stable, 
which would therefore establish Theorem \ref{ThGMV94}.
However, it is unknown if the previous row is stable: because of a typographical error, one should read 
$(21 + 2x, 12, x^2 + 20)$ instead of $(21 + 4x, 12, x^2 + 20)$. 
Besides, the proof of \cite[Proposition 1.9]{GMV94}
uses, the false statement ``$\SK_1(\mathcal{O}, f\mathcal{O}) \neq 1$'' (see Definition \ref{DefSK1Rel}) where $\mathcal{O} = \Z + \Z \sqrt{-5}$ and $f = 2$; Remark \ref{RemError} below explains this mistake in detail. 
Thus, none of the previous unimodular rows has been proven to be unstable.

In Section \ref{SecProof}, we present a proof of Theorem \ref{ThGMV94} which addresses these shortcomings. 
Our proof follows closely the lines of the original.
It consists in exhibiting a quotient $R$ of $\Z[x]$ such that the special Whitehead group $\SK_1(R)$ (see Definition \ref{DefSK1})
is not trivial.
This suffices to show that $\sr(\Z[x]) > 2$. Indeed, if $\sr(\Z[x]) \le 2$, then the natural map  
$\SK_1(\Z[x]) \rightarrow \SK_1(R)$ would be surjective (Corollary \ref{CorSK1Surjection} below), 
which is impossible as $\SK_1(\Z[x]) = 1$ \cite[Theorem 1]{BHS64}.

In Section \ref{SecUnstable}, we show in addition:

\begin{Aproposition} \label{PropUnstable}
The unimodular row 
$(3, x + 1, x^2 + 16)$ 
of $\Z[x]$ is not stable.
\end{Aproposition}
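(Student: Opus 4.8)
The plan is to convert stability into a lifting question across the quotient that annihilates the last coordinate, and then to read off the obstruction from $\SK_1$. Write $f = x^2 + 16$, set $\mathcal{O} = \Z[x]/(f)$ and let $\alpha$ be the image of $x$, so $\alpha^2 = -16$ and $\mathcal{O} = \Z[\alpha] = \Z + 4i\,\Z$ is the order of conductor $4\Z[i]$ in the Gaussian integers. The lifts to $\Z[x]$ of the unimodular pair $(1 + \alpha, 12) \in \Um_2(\mathcal{O})$ are exactly the rows $(1 + x + s_1 f,\, 12 + s_2 f)$ with $s_1, s_2 \in \Z[x]$; hence $(1 + x, 12, f)$ is stable precisely when $(1 + \alpha, 12)$ lifts to a unimodular row of $\Z[x]$. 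So it suffices to show that no such lift exists.

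First I would upgrade a hypothetical lift to a matrix and push it to $\mathcal{O}$. A unimodular row of length two completes to a determinant-one matrix over any commutative ring: if $(p,q) \in \Um_2(\Z[x])$ with $pc + qd = 1$, then $M = \left(\begin{smallmatrix} p & q \\ -d & c \end{smallmatrix}\right) \in \SL_2(\Z[x])$. Reducing modulo $f$ yields $\overline{M} \in \SL_2(\mathcal{O})$ with first row $(1 + \alpha, 12)$; over $\mathcal{O}$ one has concretely, e.g., $\left(\begin{smallmatrix} 1 + \alpha & 12 \\ 7 & 5 - 5\alpha \end{smallmatrix}\right) \in \SL_2(\mathcal{O})$. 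Any two completions of the same row differ by left multiplication by an element of $\E_2$, so the class $\sigma \in \SK_1(\mathcal{O})$ of a completion depends only on $(1 + \alpha, 12)$ — it is the Mennicke symbol of the row. By functoriality, $\sigma$ is the image of $[M] \in \SK_1(\Z[x])$ under the map induced by $\Z[x] \to \mathcal{O}$; since $\SK_1(\Z[x]) = 1$ by \cite[Theorem 1]{BHS64}, a lift forces $\sigma = 1$. Thus the proposition reduces to the single assertion that the Mennicke symbol $\sigma$ of $(1 + \alpha, 12)$ is \emph{nontrivial} in $\SK_1(\mathcal{O})$.

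This is the heart of the matter and the point at which the non-maximality of $\mathcal{O}$ must be used: for the maximal order $\Z[i]$ (a Euclidean domain) $\SK_1$ vanishes and nothing is seen, analogously to how the maximal order $\Z + \Z\sqrt{-5}$ fails to supply the obstruction in the flawed original argument. I would compute $\SK_1(\mathcal{O})$ through the conductor (Milnor) square relating $\mathcal{O}$, $\Z[i]$, $\mathcal{O}/4\Z[i] \cong \Z/4$ and $\Z[i]/4\Z[i]$, together with the vanishing of $\SK_1$ for the maximal order and for the finite (hence semilocal) quotients; the conductor $4\Z[i]$ and the small unit group $\mathcal{O}^{\times} = \{\pm 1\}$ are exactly what prevent the symbol from being trivialized. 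The aim is to show that $\SK_1(\mathcal{O}) \neq 1$ and that $\sigma$ lands in its nontrivial part.

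The main obstacle is precisely this $K$-theoretic computation. Establishing $\sigma \neq 1$ amounts to a delicate evaluation in $\SK_1$ (or in a suitable relative group $\SK_1(\mathcal{O}, \mathfrak{q})$) of the non-maximal order $\Z[4i]$, the very type of computation that \cite{GMV94} carried out incorrectly when asserting ``$\SK_1(\mathcal{O}, 2\mathcal{O}) \neq 1$'' for the \emph{maximal} order $\Z + \Z\sqrt{-5}$. The whole weight of the proof therefore rests on performing this computation correctly for $\Z[4i]$; once $\sigma \neq 1$ is secured, the non-stability of $(1 + x, 12, x^2 + 16)$ follows from the reduction and functoriality above.
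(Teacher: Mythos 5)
Your reduction is sound, and it is the same one the paper uses: identifying $\Z[x]/(x^2+16)$ with $R = \Z + 4i\Z$ via $x \mapsto 4i$, stability of the row is equivalent (Lemma \ref{LemStableRow}) to lifting a matrix in $\SL_2(R)$ with first row $(1+4i,\,12)$ to $\SL_2(\Z[x])$, and since $\SK_1(\Z[x]) = 1$ such a lift would force the Mennicke class $\sigma$ of $(1+4i,\,12)$ to vanish in $\SK_1(R)$. Your observation that the class of a completion depends only on the row, and your explicit completion $\left(\begin{smallmatrix} 1+\alpha & 12 \\ 7 & 5-5\alpha \end{smallmatrix}\right)$, are both correct.

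The genuine gap is that you never prove $\sigma \neq 1$, and you say so yourself: the ``main obstacle,'' on which ``the whole weight of the proof rests,'' is left as a plan (a conductor-square computation) rather than carried out. This is not a deferrable verification; it is the entire mathematical content of the proposition. Note also that your stated aim of showing $\SK_1(R) \neq 1$ would not by itself suffice: that already yields Theorem \ref{ThGMV94}, but Proposition \ref{PropUnstable} requires the \emph{particular} class $\sigma$ to be nontrivial, so an explicit evaluation is unavoidable. The paper supplies exactly this missing step, working with relative groups rather than a Milnor square: for $S = \Z[i]$ and $I = 4S$, matrices congruent to the identity mod $I$ have all entries in $R = \Z + I$, whence a surjection $\SK_1(R,I) \twoheadrightarrow \SK_1(S,I)$; Lemma \ref{LemSK1Rel} (via the relative sequence and the fact that $\K_2(\Z/4\Z)$ is generated by the liftable symbol $\{-1,-1\}$) gives the isomorphism $\SK_1(R,I) \simeq \SK_1(R)$; and the Bass--Milnor--Serre theorem (Theorem \ref{ThBMS}) identifies $\SK_1(S,I)$ with $\mu_2$ through the quadratic residue symbol, under which the completion of $(1+4i,\,12)$ maps to $\binom{12}{1+4i}_2 = -1$, since $12$ is a quadratic non-residue modulo the prime $1+4i$ of norm $17$. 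Your conductor-square sketch, besides being unexecuted, would face additional technical issues (exactness of Mayer--Vietoris at $\K_2$ for a Milnor square is delicate, and one must still trace the specific class $\sigma$ through the boundary map). As written, the proposal stops exactly where the proof has to begin.
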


\paragraph{Acknowledgments.}
I am grateful to Yves Cornulier, Pierre de la Harpe, Bogdan Nica, Pace Nielsen, Alain Valette, Wilberd van der Kallen and the anonymous referee 
for their interesting remarks and suggestions. 
I am particularly indebted to Pace Nielsen whose suggestions significantly influenced this note's presentation.
Among other improvements, Pace Nielsen proposed the simpler unimodular row of Proposition \ref{PropUnstable} (compare with Proposition \ref{PropUnstableIntermediate}).

This note originates from stimulating discussions with T. Y. Lam regarding the results of \cite{GMV94} 
and some comments I wrote in \cite{MO18}. His encouragements and his kind support have been extremely well appreciated. 

\section{Bass's stable rank, and surjective homomorphisms} \label{SecSr}
In this section, we prove Corollary \ref{CorSK1Surjection}, which is a simple, but important argument in the proof of
Theorem \ref{ThGMV94}. 
We also introduce the relative special Whitehead group $\SK_1(R, I)$ with Definition \ref{DefSK1Rel}, the group $\K_2(R)$ and an exact sequence, 
called the \emph{relative sequence}, which binds these groups.
This sequence is used to show the existence of an isomorphism $\SK_1(R) \simeq \SK_1(R, I)$ for suitable $R$ and $I$, 
in the proof of Theorem \ref{ThGMV94}, see Lemma \ref{LemSK1Rel} below.

\subsection*{Bass's stable rank}
Recall that rings are supposed to be commutative and unital.
The Bass stable rank can be characterized in terms of a lifting property for unimodular rows of quotient rings. 

\begin{proposition} \label{PropStableRankAndSurjectivity}
Let $R$ be a ring. Then the following are equivalent.
\begin{itemize}
\item[$(i)$] $\sr(R) \le n$.
\item[$(ii)$] For every ideal $I \subseteq R$, the map 
$\Um_n(R) \rightarrow \Um_n(R/I)$ sending $(r_1, \dots, r_n)$ to $(r_1 + I, \dots, r_n + I)$ is surjective. 
\end{itemize}
\end{proposition}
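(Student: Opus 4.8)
The plan is to prove the two implications separately, in each case translating unimodularity and stability into explicit coefficient relations and transporting them across the reduction map $R \to R/I$.

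For $(i) \Rightarrow (ii)$, I would start with an arbitrary row $(\bar r_1, \dots, \bar r_n) \in \Um_n(R/I)$ and lift its entries to some $r_1, \dots, r_n \in R$. Unimodularity in $R/I$ furnishes coefficients whose lifts $a_1, \dots, a_n \in R$ satisfy $\sum_i a_i r_i = 1 - t$ with $t := 1 - \sum_i a_i r_i \in I$. The key observation is that the augmented row $(r_1, \dots, r_n, t)$ then lies in $\Um_{n+1}(R)$, since $\sum_i a_i r_i + t = 1$. Because $n$ lies in the stable range of $R$ by hypothesis, this row is stable, so there exist $s_1, \dots, s_n \in R$ with $(r_1 + s_1 t, \dots, r_n + s_n t) \in \Um_n(R)$. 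As $t \in I$, this unimodular row reduces to $(\bar r_1, \dots, \bar r_n)$ modulo $I$, which is exactly the lift we sought, establishing surjectivity.

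For the converse $(ii) \Rightarrow (i)$, I would take an arbitrary row $(r_1, \dots, r_{n+1}) \in \Um_{n+1}(R)$ and apply the hypothesis to the principal ideal $I = R r_{n+1}$. Modulo $I$ the last entry vanishes, so unimodularity of the full row forces $(\bar r_1, \dots, \bar r_n) \in \Um_n(R/I)$. Lifting this row along the surjection $\Um_n(R) \to \Um_n(R/I)$ yields a unimodular row $(r_1', \dots, r_n')$ with $r_i' \equiv r_i \pmod{R r_{n+1}}$, that is, $r_i' = r_i + s_i r_{n+1}$ for suitable $s_i \in R$. This exhibits the stability of the original row, and since the row was arbitrary we conclude $\sr(R) \le n$.

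The routine parts are the bookkeeping of the coefficients and the compatibility of each lift with reduction modulo $I$; the one genuinely clever step is the construction in $(i) \Rightarrow (ii)$ of the auxiliary $(n+1)$-row whose last coordinate $t$ lies in $I$ and encodes the failure of the naive lift to be unimodular. Recognising that $t$ can serve simultaneously as the coordinate to be eliminated via stability and as a member of $I$, so that the elimination does not disturb the residues, is the heart of the argument. Once this dual role of $t$ is identified, both implications close immediately.
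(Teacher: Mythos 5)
Your proof is correct, and since the paper declares this proposition ``straightforward, hence left to the reader,'' your argument is precisely the intended one: the auxiliary row $(r_1,\dots,r_n,t)$ with $t = 1 - \sum_i a_i r_i \in I$ in the forward direction, and specializing $I = R r_{n+1}$ in the converse. Both implications are handled cleanly, including the implicit use of the fact that $\sr(R) \le n$ forces $n$ itself to lie in the stable range.
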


\begin{proof}
$(i) \Rightarrow (ii).$ Let $I$ be an ideal of $R$ and let $(r_1, \dots, r_n) \in R^n$ be such that 
the ideal generated by $r_1 + I, \dots, r_n + I$ is $R/I$. 
By hypothesis, there are $(s_1, \dots, s_n) \in R^n$ and $a \in I$ such that 
$s_1r_1 + \dots + s_n r_n + a = 1$. Thus $(r_1, \dots, r_n, a) \in \Um_{n + 1}(R)$. Since $\sr(R) \le n$, we can find 
$(\lambda_1, \dots, \lambda_n) \in R^n$ such that $(r_1 + \lambda_1 a, \dots, r_n + \lambda_n a) \in \Um_n(R)$, 
which yields the result.

$(ii) \Rightarrow (i).$ Let $\mathbf{r} = (r_1, \dots, r_{n + 1}) \in \Um_{n + 1}(R)$. 
We certainly have $(r_1 + I, \dots, r_n + I) \in \Um_n(R/I)$ for 
$I = Rr_{n + 1}$. Assumption $(ii)$ entails that there is $(\lambda_1, \dots, \lambda_n) \in R^n$ such that 
$(r_1 + \lambda_1 r_{n + 1}, \dots, r_n + \lambda_n r_{n + 1}) \in \Um_n(R)$, which shows that $\mathbf{r}$ is stable.
\end{proof}

Specializing $n$ to $1$, we obtain:

\begin{proposition}\cite[Lemma 6.1]{EO67} \label{SrOneSurjectivity}
Let $R$ be a ring. Then the following are equivalent.
\begin{itemize}
\item[$(i)$] $\sr(R) \le 1$.
\item[$(ii)$] For every ideal $I \subseteq R$, the natural map $R^{\times} \rightarrow (R/I)^{\times}$ is surjective. 
\end{itemize}
\end{proposition}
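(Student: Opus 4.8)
The plan is to derive this statement directly as the case $n = 1$ of Proposition \ref{PropStableRankAndSurjectivity}, so that the only work is to identify the objects appearing there with those in the present statement. First I would observe that $\Um_1(R) = R^{\times}$: a singleton row $(r) \in R^1$ is unimodular precisely when $Rr = R$, and this holds if and only if $r$ is a unit of $R$. Applying the same identification to the quotient ring gives $\Um_1(R/I) = (R/I)^{\times}$ for every ideal $I \subseteq R$.

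With these identifications in hand, I would check that the reduction map $\Um_1(R) \rightarrow \Um_1(R/I)$ of Proposition \ref{PropStableRankAndSurjectivity}, which sends $(r)$ to $(r + I)$, is exactly the natural homomorphism $R^{\times} \rightarrow (R/I)^{\times}$ induced by the quotient map $R \rightarrow R/I$. Consequently, condition $(ii)$ of Proposition \ref{PropStableRankAndSurjectivity} specialized to $n = 1$ is literally condition $(ii)$ of the present statement, while condition $(i)$, namely $\sr(R) \le 1$, is unchanged. Invoking Proposition \ref{PropStableRankAndSurjectivity} with $n = 1$ then yields the asserted equivalence.

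There is no genuine obstacle in this argument, since the entire content reduces to the elementary fact that unimodular rows of length one are exactly the units of the ring. The only point demanding a moment's care is confirming that, under the identification $\Um_1 \cong (\cdot)^{\times}$, the induced map on units coincides with the homomorphism induced by the quotient map; this is immediate from the definitions, as both send a unit $r$ to its class $r + I$.
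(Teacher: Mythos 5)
Your proposal is correct and matches the paper exactly: the paper obtains this statement by "specializing $n$ to $1$" in Proposition \ref{PropStableRankAndSurjectivity}, which is precisely your argument via the identification $\Um_1(R) = R^{\times}$. No gaps; your extra care in checking that the induced map on units agrees with the reduction map of Proposition \ref{PropStableRankAndSurjectivity} is more detail than the paper itself provides.
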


The condition $\sr(R) \le 1$ received special attention in \cite{EO67} and \cite{Vas84}.
We shall see with Proposition \ref{PropSr2} below that the condition $\sr(R) \le 2$ can also be interpreted in terms 
of surjective group homomorphisms.
We denote by $\Jac(R)$, the \emph{Jacobson radical of $R$}, that is, the intersection of the maximal ideals of $R$.
We record the following proposition for later use.

\begin{proposition}{(\cite[Lemma 11.4.6]{McCR87} and \cite[Exercise 4D.8]{Mag02})}  \label{PropRank}
Let $I$ be an ideal of $R$. Then we have
$\sr(R/I) \le \sr(R)$ and equality holds if $I \subseteq \Jac(R)$.
\end{proposition}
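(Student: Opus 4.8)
The plan is to prove the two assertions separately, in both cases reducing the stable-range condition to the surjectivity criterion of Proposition \ref{PropStableRankAndSurjectivity} and exploiting the correspondence between ideals of $R/I$ and ideals of $R$ that contain $I$.

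For the inequality $\sr(R/I) \le \sr(R)$, I would set $n = \sr(R)$ and verify condition $(ii)$ of Proposition \ref{PropStableRankAndSurjectivity} for the ring $R/I$. Every ideal of $R/I$ has the form $J/I$ for a unique ideal $J \subseteq R$ with $I \subseteq J$, and there is a canonical isomorphism $(R/I)/(J/I) \cong R/J$. Given a row in $\Um_n(R/J)$, I would first lift it to a row in $\Um_n(R)$ using $\sr(R) \le n$, then push that row forward along the projection $R \to R/I$. The image lies in $\Um_n(R/I)$ and reduces modulo $J/I$ to the prescribed row, so $\Um_n(R/I) \to \Um_n(R/J)$ is surjective for every such $J$. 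This yields $\sr(R/I) \le n$.

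For the reverse inequality under the hypothesis $I \subseteq \Jac(R)$, the central step is a Nakayama-type lifting principle that I would isolate first: if a row $(r_1, \dots, r_m)$ over $R$ has unimodular image in $R/I$, then it is already unimodular over $R$. Indeed, $\sum_i R r_i + I = R$ forces $\sum_i R r_i$ to contain an element $1 - j$ with $j \in I \subseteq \Jac(R)$; such an element is a unit, so $\sum_i R r_i = R$. Granting this, set $n = \sr(R/I)$ and take an arbitrary $(r_1, \dots, r_{n+1}) \in \Um_{n+1}(R)$. Its image in $R/I$ is unimodular, hence stable, so there are $\bar{s}_1, \dots, \bar{s}_n \in R/I$ witnessing stability; lifting them arbitrarily to $s_1, \dots, s_n \in R$, the row $(r_1 + s_1 r_{n+1}, \dots, r_n + s_n r_{n+1})$ has unimodular image in $R/I$ and is therefore unimodular over $R$ by the lifting principle. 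Thus every row in $\Um_{n+1}(R)$ is stable, giving $\sr(R) \le n$ and hence equality.

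I expect the substantive content to reside entirely in the unit argument of the second part, where the Jacobson-radical hypothesis is used; the first part is essentially bookkeeping. The only point demanding care is to keep the singled-out last coordinate consistent when transporting the notion of stability between $R$ and $R/I$, so that the lifted adjustments $s_i$ genuinely certify stability of the original row.
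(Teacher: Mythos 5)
Your proof is correct. The paper itself gives no argument for this proposition — it is quoted directly from \cite{McCR87} — and your two-step argument (reducing both directions to the surjectivity criterion of Proposition \ref{PropStableRankAndSurjectivity} via the correspondence between ideals of $R/I$ and ideals of $R$ containing $I$, plus the Nakayama-type observation that $1 - j$ is a unit for $j \in \Jac(R)$ to transfer unimodularity back from $R/I$ to $R$) is essentially the standard proof found in that reference.
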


\subsection*{Rings of stable rank at most $2$}
Rings of stable rank at most $2$ enjoy the following characterization.
\begin{proposition} \label{PropSr2}
Let $R$ be a ring. Then then following are equivalent:
\begin{itemize}
\item[$(i)$] $\sr(R) \le 2$.
\item[$(ii)$] The natural map $\SL_2(R) \rightarrow \SL_2(\ov{R})$ is surjective for every quotient 
$\ov{R}$ of $R$.
\item[$(iii)$] The natural map $\SL_n(R) \rightarrow \SL_n(\ov{R})$ is surjective for every quotient 
$\ov{R}$ of $R$ and every $n \ge 2$.
\end{itemize}
\end{proposition}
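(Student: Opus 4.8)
The plan is to close the cycle $(iii) \Rightarrow (ii) \Rightarrow (i) \Rightarrow (iii)$. The implication $(iii) \Rightarrow (ii)$ is immediate: it is the special case $n = 2$. For $(ii) \Rightarrow (i)$ I would invoke Lemma \ref{LemStableRow}. Given an arbitrary $(a, b, c) \in \Um_3(R)$, the pair $(\ov{a}, \ov{b})$ is unimodular in $\ov{R} = R/Rc$, hence completes to some $\begin{pmatrix} \ov{a} & \ov{b} \\ * & * \end{pmatrix} \in \SL_2(\ov{R})$. By $(ii)$ the map $\SL_2(R) \to \SL_2(\ov{R})$ is onto, so every matrix with first row $(\ov{a}, \ov{b})$ lifts, and Lemma \ref{LemStableRow} then shows that $(a, b, c)$ is stable. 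As the row was arbitrary, $2$ lies in the stable range, i.e. $\sr(R) \le 2$.

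The substance is $(i) \Rightarrow (iii)$, which I would prove by induction on $n$, uniformly over all quotients $\ov{R} = R/I$. For the base case $n = 2$, lift the entries of a given $\ov{M} = \begin{pmatrix} \ov{a} & \ov{b} \\ \ov{d} & \ov{e} \end{pmatrix}$ arbitrarily to $a, b, d, e \in R$ and set $c = 1 - (ae - bd) \in I$; the identity $ea - db + c = 1$ shows $(a, b, c) \in \Um_3(R)$. Since $\sr(R) \le 2$, this row is stable, so after adding suitable multiples of $c$ one obtains $(a_1, b_1) \in \Um_2(R)$ with $(a_1, b_1) \equiv (\ov{a}, \ov{b})$ modulo $I$. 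Completing $(a_1,b_1)$ to a matrix of $\SL_2(R)$ and correcting its second row by a single lower elementary factor (two matrices of $\SL_2(\ov{R})$ sharing a first row differ by such a factor, as the explicit computation in Lemma \ref{LemStableRow} shows) yields the required lift.

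For the inductive step $n \ge 3$, the first column $\ov{v}$ of $\ov{M} \in \SL_n(\ov{R})$ is unimodular; since $\sr(R) \le 2 \le n$, Proposition \ref{PropStableRankAndSurjectivity} makes $\Um_n(R) \to \Um_n(\ov{R})$ surjective, so I would lift $\ov{v}$ to some $v \in \Um_n(R)$. The crux is to produce $E \in \E_n(R)$ with $Ev = e_1$: because $n - 1 \ge \sr(R)$, the row $v$ is itself stable, so elementary row operations turn its first $n-1$ entries into a unimodular row, one entry can then be promoted to $1$ and used to clear all the others, and a pair of transvections finally moves the resulting standard vector to $e_1$. Replacing $\ov{M}$ by $\ov{E}\,\ov{M}$ makes its first column $e_1$; clearing the first row on the right by a lift $\ov{F}$ of an elementary matrix puts $\ov{E}\,\ov{M}\,\ov{F}$ in block form $\mathrm{diag}(1, \ov{M_1})$ with $\ov{M_1} \in \SL_{n-1}(\ov{R})$. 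The induction hypothesis lifts $\ov{M_1}$ to $M_1 \in \SL_{n-1}(R)$, and undoing the two reductions with the lifts in $\E_n(R)$ of $E$ and $F$ produces a lift $E^{-1}\,\mathrm{diag}(1, M_1)\,F^{-1} \in \SL_n(R)$ of $\ov{M}$.

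I expect the elementary reduction of the lifted first column to $e_1$ — the only place where the stable range hypothesis does genuine work — to be the main obstacle. The base case $n = 2$ is delicate for the complementary reason: there reduction to $e_1$ is unavailable, and one must instead exploit stability to fix the first row and then adjust the second row by hand, which is exactly why the equivalence cannot be phrased for $\SL_n$ alone but must be anchored at $\SL_2$.
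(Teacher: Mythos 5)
Your proposal is correct and takes essentially the same approach as the paper: $(iii) \Rightarrow (ii)$ is trivial, $(ii) \Rightarrow (i)$ is exactly Lemma \ref{LemStableRow}, and $(i) \Rightarrow (iii)$ goes by induction on $n$, with the base case $n = 2$ handled via a unimodular triple $(a, b, c)$ with $c \in I$ and the stability of that row (you merely inline the proof of Lemma \ref{LemStableRow} where the paper cites it). The only difference is bookkeeping in the inductive step: you lift the first column to $\Um_n(R)$ and reduce it to $e_1$ by elementary matrices over $R$, whereas the paper reduces $A$ to block-diagonal form by elementary matrices over $\ov{R}$ (using $\sr(\ov{R}) \le 2$ via Proposition \ref{PropRank}) and then lifts those elementary factors entrywise --- an equivalent variation.
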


Our proof of Proposition \ref{PropSr2} relies on

\begin{lemma}(\cite[Lemma 6.2]{GMV94}, \cite[Corollary 8.3]{EO67}) \label{LemStableRow}
Let $R$ be a ring. Let $(a, b, c) \in \Um_3(R)$ and let $r \mapsto \ov{r}$ denote the natural map from $R$ onto $R/Rc$.
Then the following are equivalent:
\begin{itemize}
\item[$(i)$] The row $(a, b, c)$ is stable.
\item[$(ii)$] Every matrix  
$\begin{pmatrix} \ov{a} & \ov{b} \\  \ov{d} & \ov{e} \end{pmatrix} \in\SL_2(R/Rc)$ has a lift in $\SL_2(R)$.
\end{itemize}
\end{lemma}

\begin{proof}
$(i) \Rightarrow (ii)$. Let $A = \begin{pmatrix} \ov{a} & \ov{b} \\ \ov{d} &  \ov{e} \end{pmatrix} \in \SL_2(R/Rc)$.
Since $(a, b, c)$ is stable, we can find $(r, s) \in \Um_2(R)$ such that $\ov{r} = \ov{a}$ and $\ov{s} = \ov{b}$.   
Let $u, v \in R$ be such that $\begin{pmatrix} r & s \\ u & v \end{pmatrix} \in \SL_2(R)$. 
Then 
$\begin{pmatrix}\overline{a} & \overline{b}\\ \overline{u} & \overline{v}\end{pmatrix}
A^{-1} = \begin{pmatrix} 1 & 0 \\ \ov{w} & 1 \end{pmatrix}$ for some $w \in R$. Therefore 
$\begin{pmatrix} 1 & 0 \\ w & 1 \end{pmatrix}^{-1}
\begin{pmatrix} r & s \\ u & v \end{pmatrix}$ is a lift of $A$.

$(ii) \Rightarrow (i)$. By assumption, we can find $(r, s) \in \Um_2(R)$ such that 
$\overline{r} = \overline{a}$ and $\overline{s} = \overline{b}$, i.e.,  we have 
$r = a + \lambda c$ and $s = b + \mu c$ for some $\lambda, \mu \in R$. 
It follows immediately that $(a, b, c)$ is stable. 
\end{proof}
 
Let $R$ be a ring. Let $n \ge 2$ and denote by $I_n$ the $n \times n$ identify matrix. 
For $1 \le i, j \le n$, let $\epsilon_{ij}$ be the $n \times n$ matrix whose $(i, j)$ entry is $1$ and whose other entries are zero.
For $a \in R$ and $i \neq j$, we set $e_{ij}(a) = I_n + a \epsilon_{ij}$ and call any such matrix an \emph{elementary matrix}.
Let us denote by $\E_n(R)$ the subgroup of $\SL_n(R)$ generated by the elementary matrices.
 
\begin{sproof}{Proof of Proposition \ref{PropSr2}}
Clearly, we have $(iii) \Rightarrow (ii)$. The implication $(ii) \Rightarrow (i)$ is given by Lemma \ref{LemStableRow}.
Hence it only remains to show that $(i) \Rightarrow (iii)$.

Assume that $(i)$ holds and let $n \ge 2$, $A \in \SL_n(\ov{R})$ where $\ov{R} = R/I$ for some ideal $I$ of $R$. 
If $n = 2$, then it follows from Lemma \ref{LemStableRow} that $A$ has a lift in $\SL_2(R)$.

Indeed, write $A = \begin{pmatrix} a + I & b + I \\ d + I &  e + I \end{pmatrix} \in \SL_2(\ov{R})$. 
Putting $c:=1-(ae-bd)\in I$, then $(a,b,c)\in \Um_3(R)$ and 
$\widetilde{A} = \begin{pmatrix} a + Rc & b + Rc \\ d + Rc &  e + Rc \end{pmatrix} \in \SL_2(R/Rc)$.
By hypothesis, we have $\sr(R) \le 2$, so that Lemma \ref{LemStableRow} applies and
provides us with a lift of $\widetilde{A}$ in $\SL_2(R)$ which is also a lift of $A$.

We can now assume that $n > 2$ and proceed by induction on $n$.
Since $\sr(\ov{R}) \le 2$ by Proposition \ref{PropRank}, and because the first row of $A$ is unimodular, 
we can find $E, E' \in \E_n(\ov{R})$ 
such that $A = E  \begin{pmatrix} 1 & 0 \\ 0 &A' \end{pmatrix} E'$ with $A' \in \SL_{n - 1}(\ov{R})$. 
By induction hypothesis, the matrix $A'$ has a lift in $\SL_{n - 1}(R)$.
Clearly, the matrices $E$ and $E'$ lift to $\SL_n(R)$. Thus $A$ has a lift in $\SL_n(R)$.
\end{sproof}

The next result refers to the \emph{special Whitehead group} $\SK_1(R)$ of a ring $R$. 
We define this group as follows.
Let $\E(R) := \bigcup_n \E_n(R)$ and $\SL(R) := \bigcup_n \SL_n(R)$ be the ascending unions for which 
the embeddings $\E_n(R) \rightarrow \E_{n + 1}(R)$ and  $\SL_n(R) \rightarrow \SL_{n + 1}(R)$ are defined 
through $A \mapsto \begin{pmatrix} A & 0 \\ 0 & 1 \end{pmatrix}$. 
Then $\E(R)$  is a normal subgroup of $\SL(R)$ \cite[Whitehead Lemma 9.7]{Mag02}.

\begin{definition} \label{DefSK1}
$\SK_1(R) := \SL(R)/\E(R)$.
\end{definition}

The following corollary is an immediate consequence of Proposition \ref{PropSr2}.
\begin{corollary} \label{CorSK1Surjection}
Let $R$ be a ring satisfying $\sr(R) \le 2$. Let $\ov{R}$ be a quotient of $R$.
Then the natural map $\SK_1(R) \rightarrow \SK_1(\ov{R})$ is surjective.
\end{corollary}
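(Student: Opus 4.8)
The plan is to deduce surjectivity on $\SK_1$ directly from the level-wise surjectivity of the maps $\SL_n(R) \to \SL_n(\ov{R})$ furnished by Proposition \ref{PropSr2}. The key point is that $\SK_1$ is built as a quotient of the ascending union $\SL(R) = \bigcup_n \SL_n(R)$, so every element is already visible at a single finite stage, and it suffices to lift a matrix living at that stage.

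Concretely, I would first fix an arbitrary class in $\SK_1(\ov{R})$ and choose a representative $\ov{A} \in \SL(\ov{R})$. By definition of the ascending union, $\ov{A} \in \SL_n(\ov{R})$ for some $n$, and after enlarging $n$ if necessary (via the standard embedding sending $A$ to $\begin{pmatrix} A & 0 \\ 0 & 1 \end{pmatrix}$) we may assume $n \ge 2$. Since $\sr(R) \le 2$, Proposition \ref{PropSr2} in the form $(i) \Rightarrow (iii)$ gives that the natural map $\SL_n(R) \to \SL_n(\ov{R})$ is surjective, so there exists $A \in \SL_n(R)$ mapping to $\ov{A}$.

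It then remains to verify that the class of $A$ in $\SK_1(R)$ maps to the chosen class. This is immediate from functoriality: the quotient homomorphism $R \to \ov{R}$ induces compatible maps $\SL_n(R) \to \SL_n(\ov{R})$ and $\E_n(R) \to \E_n(\ov{R})$, hence a well-defined homomorphism $\SK_1(R) \to \SK_1(\ov{R})$ under which the class of $A$ goes to the class of $\ov{A}$. Since the chosen class was arbitrary, surjectivity follows.

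I expect no genuine obstacle here: the entire content is packaged in Proposition \ref{PropSr2}, and the remaining work is the routine observation that a representative of any $\SK_1$-class can be chosen at a finite stage $n \ge 2$ where level-wise surjectivity applies. The only point deserving a moment's care is that the lift is compatible with the stabilization maps, which holds because the embeddings $\SL_n(R) \hookrightarrow \SL_{n+1}(R)$ commute with the quotient $R \to \ov{R}$; this ensures that passing to the colimit and then to the quotient by $\E$ is harmless.
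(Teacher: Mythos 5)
Your proof is correct and follows exactly the route the paper intends: the paper states this corollary with only a $\square$, treating it as immediate from Proposition \ref{PropSr2}, and your argument (represent an $\SK_1(\ov{R})$-class at a finite stage $n \ge 2$, lift via $(i) \Rightarrow (iii)$, and invoke functoriality/compatibility with stabilization) is precisely the deduction being left to the reader.
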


Following \cite{Coh66}, we call a ring $R$ a \emph{$\GE_2$-ring} if $\SL_2(R) = \E_2(R)$.

The next corollaries are of independent interest.
\begin{corollary} \label{CorGE}
Let $R$ be a $\GE_2$-ring satisfying $\sr(R) \le 2$.
Then every quotient of $R$ is a $\GE_2$-ring.
\end{corollary}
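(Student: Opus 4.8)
The plan is to establish the nontrivial inclusion $\SL_2(\ov{R}) \subseteq \E_2(\ov{R})$, since the reverse inclusion $\E_2(\ov{R}) \subseteq \SL_2(\ov{R})$ holds for any ring. So I would fix an arbitrary quotient $\ov{R} = R/I$ and an arbitrary matrix $A \in \SL_2(\ov{R})$, and aim to realize $A$ as a product of elementary matrices over $\ov{R}$.

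First I would invoke the hypothesis $\sr(R) \le 2$ together with the implication $(i) \Rightarrow (ii)$ of Proposition \ref{PropSr2}: this provides a lift $\widetilde{A} \in \SL_2(R)$ of $A$ under the natural map $\SL_2(R) \rightarrow \SL_2(\ov{R})$. Next I would use the $\GE_2$ assumption on $R$, namely $\SL_2(R) = \E_2(R)$, to express $\widetilde{A}$ as a product of elementary matrices over $R$.

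The final step is to push this factorization down to $\ov{R}$. The reduction homomorphism $R \rightarrow \ov{R}$ induces a group homomorphism $\SL_2(R) \rightarrow \SL_2(\ov{R})$ which carries each elementary matrix over $R$ to the corresponding elementary matrix over $\ov{R}$, since applying the reduction entrywise turns a single off-diagonal entry $r$ into $r + I$. Consequently the image $A$ of $\widetilde{A}$ is a product of elementary matrices over $\ov{R}$, that is, $A \in \E_2(\ov{R})$. As $A \in \SL_2(\ov{R})$ was arbitrary, this yields $\SL_2(\ov{R}) = \E_2(\ov{R})$, so $\ov{R}$ is a $\GE_2$-ring.

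I do not anticipate a genuine obstacle here: the argument is a direct concatenation of the surjectivity furnished by Proposition \ref{PropSr2} with the definitions of $\GE_2$-ring and of $\E_2$. The only point requiring minor care is the observation that elementary matrices map to elementary matrices under reduction, which is precisely what lets the factorization descend to the quotient.
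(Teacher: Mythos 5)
Your proof is correct and is exactly the argument the paper intends: the paper leaves this corollary unproved (marked as immediate from Proposition \ref{PropSr2}), and your three steps --- lift $A$ via the surjectivity in Proposition \ref{PropSr2}, factor the lift into elementary matrices using the $\GE_2$ hypothesis on $R$, and reduce the factorization modulo $I$ --- are precisely the intended spelled-out version of that one-line deduction.
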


\begin{proof}
Let $\overline{R}$ be a quotient of $R$ and let $A \in \SL_2(\overline{R})$. 
By Proposition \ref{PropSr2}, the matrix $A$ is the image of some matrix in $\SL_2(R)$.
Since $\SL_2(R) = \E_2(R)$ by assumption, we infer that $A \in \E_2(\overline{R})$.
\end{proof}

\begin{remark}
Corollary \ref{CorGE} provides a straightforward proof of \cite[Theorem A]{Guy18}. 
\end{remark}

\begin{corollary} \cite[Theorem 3.6]{McGov08}
Let $R$ be a ring such that any proper quotient of $R/\Jac(R)$ has stable rank $1$. Then $\sr(R) \le 2$.
\end{corollary}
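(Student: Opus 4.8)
The plan is to pass to the Jacobson-semisimple quotient and then apply the criterion of Proposition~\ref{PropSr2}. Taking $I = \Jac(R)$ in Proposition~\ref{PropRank} gives $\sr(R) = \sr(R/\Jac(R))$, so it suffices to bound $\sr(R/\Jac(R))$. The proper quotients of $R/\Jac(R)$ are the rings $R/I$ with $I \supsetneq \Jac(R)$, each of which has stable rank $1$ by hypothesis. Hence, after replacing $R$ by $R/\Jac(R)$, I may assume that every proper quotient of $R$ has stable rank $1$, and I aim to prove $\sr(R) \le 2$.

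By Proposition~\ref{PropSr2} it is enough to show that the natural map $\SL_2(R) \to \SL_2(\ov R)$ is surjective for every quotient $\ov R = R/I$. The cases $I = 0$ (where the map is the identity) and $I = R$ (where the target is the zero ring) are immediate, so assume $0 \neq I \neq R$. Then $\ov R$ is a proper quotient of $R$, whence $\sr(\ov R) \le 1$ by hypothesis.

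The key step is the classical implication $\sr(T) \le 1 \Rightarrow \SL_2(T) = \E_2(T)$, which I would prove directly. Given $M = \begin{pmatrix} a & b \\ c & d \end{pmatrix} \in \SL_2(T)$, the relation $ad - bc = 1$ shows that $(a, c) \in \Um_2(T)$, so $\sr(T) \le 1$ yields $t \in T$ with $u \Doteq a + tc \in T^{\times}$. Left multiplying $M$ by $\begin{pmatrix} 1 & t \\ 0 & 1 \end{pmatrix}$ makes the top-left entry the unit $u$; clearing the remaining off-diagonal entries by elementary row and column operations reduces $M$ to $\operatorname{diag}(u, u^{-1})$, which lies in $\E_2(T)$ by the Whitehead identity. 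Thus $M \in \E_2(T)$.

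Applying this with $T = \ov R$, every element of $\SL_2(\ov R) = \E_2(\ov R)$ is a product of elementary matrices, and each elementary matrix of $\ov R$ is the image of an elementary matrix of $R$ (lift the off-diagonal entry arbitrarily); hence $\SL_2(\ov R)$ lies in the image of $\SL_2(R)$. The required surjectivity follows, and Proposition~\ref{PropSr2} gives $\sr(R) \le 2$. The only non-formal ingredient is the implication $\sr \le 1 \Rightarrow \SL_2 = \E_2$, which is where I expect the actual work to lie; the rest is bookkeeping with the results already established.
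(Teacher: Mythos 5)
Your proposal is correct and follows essentially the same route as the paper's (very terse) proof: reduce to $R/\Jac(R)$ via Proposition~\ref{PropRank}, observe that stable rank~$1$ rings are $\GE_2$-rings (the paper asserts this as ``easy to check''; you supply the standard Whitehead-lemma argument), and conclude via the surjectivity criterion of Proposition~\ref{PropSr2}. The only difference is that you spell out the details the paper leaves to the reader.
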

\begin{proof}
It is easy to check that rings of stable rank $1$ are $\GE_2$-rings. The result follows by
combining Propositions \ref{PropSr2} and \ref{PropRank}.
\end{proof}

\subsection*{The groups $\SK_1(R, I)$, $\K_2(R)$ and the relative sequence}
In this section, we define the relative special Whitehead group $\SK_1(R, I)$ 
for $I$ an ideal of a ring $R$ and the group $\K_2(R)$ from algebraic K-theory. 
We describe then the \emph{relative sequence}, an exact sequence relating them to one another.
This sequence will come in handy for the K-theoretical computations of Section \ref{SecProof}.

Recall that  $\E_n(R)$  is the subgroup of $\SL_n(R)$ generated by the elementary matrices 
$e_{ij}(r)$ with $r \in R$. 
Let $I$ be an ideal of $R$. We denote by $\E_n(R, I)$ the normal subgroup of $\E_n(R)$ 
which is normally generated by the matrices $e_{ij}(a)$ with $a \in I$, $1 \le i \neq j \le n$.
We denote by $\SL_n(R, I)$ the kernel of the natural map $\SL_n(R) \rightarrow \SL_n(R/I)$.
Let $\E(R, I) := \bigcup_n \E_n(R, I)$ and $\SL(R, I) := \bigcup_n \SL_n(R, I)$ be the ascending unions for which 
the embeddings $\E_n(R, I) \rightarrow \E_{n + 1}(R, I)$ and  $\SL_n(R, I) \rightarrow \SL_{n + 1}(R, I)$ are defined 
through $A \mapsto \begin{pmatrix} A & 0 \\ 0 & 1 \end{pmatrix}$. 
Then $\E(R, I)$  is a normal subgroup of $\SL(R, I)$ \cite[Relative Whitehead Lemma 11.1]{Mag02}.

\begin{definition} \label{DefSK1Rel}
$\SK_1(R, I) := \SL(R, I)/\E(R, I)$.
\end{definition}

We now turn to the definition of $\K_2(R)$. The \emph{Steinberg group} $\St_n(R)$ 
is the group with generators $x_{ij}(r)$, with $i \neq j$, $1\le i, j \le n$ and $r \in R$ subject to the defining relations:
\begin{equation}
\begin{split}
x_{ij}(r) x_{ij}(s)  = x_{ij}(r + s), \\ 
\lbrack x_{ij}(r), x_{kl}(s)\rbrack = 
\left\{ \begin{array}{c} 1, \text {if } i \neq l, j \neq k, \\ x_{il}(rs), \text{ if } i \neq l, j = k.\end{array}\right.
\end{split}
\end{equation}

Removing the restrictions $i \le n$ and $j \le n$ on the generators $x_{ij}(r)$, 
the same presentation defines the \emph{Steinberg group} $\St(R)$. 
Because the elementary matrices $e_{ij}(r)$ obey the above standard relations and generate $\E(R)$, 
the map $x_{ij}(r) \mapsto e_{ij}(r)$ induces a surjective group homomorphism 
$\St(R) \twoheadrightarrow \E(R)$. The kernel of this homomorphism is $\K_2(R)$. 

Finally, we introduce the so-called \emph{relative sequence}:
\begin{equation} \label{EqExact}
\K_2(R/I) \xrightarrow{\partial_1} \SK_1(R, I) \rightarrow \SK_1(R) \rightarrow \SK_1(R/I)
\end{equation}
where the second and third arrows are induced respectively by the inclusion 
$\SL(R, I) \subseteq \SL(R)$ and the natural map $\SL(R) \rightarrow \SL(R/I)$. 
For the connecting homomorphism $\partial_1$, 
we refer the reader to \cite[Theorem 13.20 and Example 13.22]{Mag02} where this sequence is shown to be exact.

\section{Proof of Theorem \ref{ThGMV94}} \label{SecProof}

We shall establish

\begin{proposition} \label{PropSK1neq1}
$\SK_1(\Z + 4i \Z) \neq 1$ where $i = \sqrt{-1}$.
\end{proposition}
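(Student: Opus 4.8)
The plan is to exploit the fact that $\mathcal{O} = \Z + 4i\Z$ is a non-maximal order in the ring of Gaussian integers $\Z[i]$, and to migrate the computation of $\SK_1(\mathcal{O})$ to a \emph{relative} group over $\Z[i]$, where the ring is Euclidean and the arithmetic of roots of unity is available. First I would record the elementary algebra of the order: the integral closure of $\mathcal{O}$ in $\Q(i)$ is $\Z[i]$, the conductor of $\mathcal{O}$ is the ideal $\mathfrak{f} = 4\Z[i]$, which is simultaneously an ideal of $\mathcal{O}$ and of $\Z[i]$, and there are identifications $\mathcal{O}/\mathfrak{f} \cong \Z/4\Z$ and $\Z[i]/\mathfrak{f} \cong \Z[i]/(1 + i)^4$, the latter a finite local ring (recall $(4) = (1+i)^4$ in $\Z[i]$). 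I would also note at the outset that $\mathcal{O}^{\times} = \{\pm 1\}$ and that $\Z[i]$, being Euclidean, satisfies $\SK_1(\Z[i]) = 1$.

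Next I would feed $R = \mathcal{O}$ and $I = \mathfrak{f}$ into the relative sequence \eqref{EqExact}, obtaining
\[
\K_2(\Z/4\Z) \xrightarrow{\partial_1} \SK_1(\mathcal{O}, \mathfrak{f}) \rightarrow \SK_1(\mathcal{O}) \rightarrow \SK_1(\Z/4\Z).
\]
Here $\Z/4\Z$ is semilocal, so $\SK_1(\Z/4\Z) = 1$, and $\K_2(\Z/4\Z) = 1$ by the Dennis--Stein computation of $\K_2(\Z/n)$. Exactness then collapses the sequence to an isomorphism $\SK_1(\mathcal{O}) \cong \SK_1(\mathcal{O}, \mathfrak{f})$, so it suffices to prove that the relative group $\SK_1(\mathcal{O}, \mathfrak{f})$ is nontrivial.

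The third step replaces $\mathcal{O}$ by its integral closure. Since $\mathfrak{f}$ is a common ideal of $\mathcal{O}$ and $\Z[i]$ on which the inclusion $\mathcal{O} \hookrightarrow \Z[i]$ is the identity, excision for relative $K_1$ (equivalently, Milnor's Mayer--Vietoris sequence attached to the conductor square with vertices $\mathcal{O}, \Z[i], \Z/4\Z, \Z[i]/\mathfrak{f}$) yields $\SK_1(\mathcal{O}, \mathfrak{f}) \cong \SK_1(\Z[i], 4\Z[i])$. Applying the relative sequence once more, now to $(\Z[i], \mathfrak{f})$ and using $\SK_1(\Z[i]) = 1$, identifies this group with $\operatorname{coker}\bigl(\K_2(\Z[i]) \rightarrow \K_2(\Z[i]/(1+i)^4)\bigr)$. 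The problem is thereby reduced to a statement purely about the Dedekind ring $\Z[i]$ and a finite local quotient, all preceding steps being formal $K$-theory.

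The hard part will be the final nonvanishing, that is, $\SK_1(\Z[i], 4\Z[i]) \neq 1$. Here I would invoke the theory of Mennicke symbols from Bass--Milnor--Serre's solution of the congruence subgroup problem: because $\Q(i)$ is totally imaginary with group of roots of unity $\mu = \langle i \rangle$, the relative group $\SK_1(\Z[i], \mathfrak{q})$ need not vanish and is governed by a power-residue (Mennicke) symbol on $\Z[i]/\mathfrak{q}$. Concretely, I would exhibit a pair $(a, b)$ with $a \equiv 1$ and $b \equiv 0 \pmod{4\Z[i]}$ defining a matrix in $\SL_2(\Z[i], 4\Z[i])$, and certify that it escapes $\E(\Z[i], 4\Z[i])$ by evaluating the associated symbol through a reciprocity law at the ramified prime $(1+i)$. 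This is the genuinely arithmetic core of the argument, and it is exactly the point at which the ramification of $2$ in $\Z[i]$ and the presence of the fourth roots of unity conspire to make the symbol, and hence $\SK_1(\mathcal{O})$, nontrivial. Equivalently, one may carry out the explicit Dennis--Stein computation of $\K_2(\Z[i]/(1+i)^4)$ together with the image of $\K_2(\Z[i])$ and read off that the cokernel above is nontrivial; the two routes meet at the same reciprocity input.
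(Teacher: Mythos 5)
Your overall route is the same as the paper's (collapse the relative sequence to identify $\SK_1(\mathcal{O})$ with a relative group, pass to $\Z[i]$, and finish with Bass--Milnor--Serre), but your collapse step contains a genuine error, and it occurs at exactly the delicate point of the whole argument. You claim $\K_2(\Z/4\Z) = 1$ ``by the Dennis--Stein computation''. That computation says the opposite: $\K_2(\Z/n\Z)$ is trivial precisely when $4 \nmid n$, and $\K_2(\Z/4\Z) \simeq \Z/2\Z$, generated by the Steinberg symbol $\{-1,-1\}$. So exactness alone only gives $\SK_1(\mathcal{O}) \simeq \SK_1(\mathcal{O},\mathfrak{f})/\operatorname{im}(\partial_1)$, and this quotient is in real danger of being trivial: the group $\SK_1(\Z[i],4\Z[i])$ you ultimately produce is itself only $\Z/2\Z$, so a nonzero $\partial_1$ could kill everything. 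The paper closes exactly this hole (Lemma \ref{LemSK1Rel}): since $-1$ is a unit of $\mathcal{O}$, the generator $\{-1+I,-1+I\}_{\mathcal{O}/I}$ of $\K_2(\mathcal{O}/I)$ lifts to the symbol $\{-1,-1\}_{\mathcal{O}} \in \K_2(\mathcal{O})$, and by the construction of the connecting homomorphism (equivalently, exactness of the longer sequence $\K_2(\mathcal{O}) \rightarrow \K_2(\mathcal{O}/I) \rightarrow \SK_1(\mathcal{O},I)$), any class that lifts has trivial image under $\partial_1$. That symbol-lifting argument is the missing idea; without it your first reduction does not go through.

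A secondary caution: your appeal to excision for the isomorphism $\SK_1(\mathcal{O},\mathfrak{f}) \simeq \SK_1(\Z[i],4\Z[i])$ is also unsafe, since excision fails for relative $\K_1$ in general --- this is precisely why Milnor's Mayer--Vietoris sequence for a conductor square starts at $\K_1$ rather than $\K_2$. What is true, and all that is needed, is a surjection $\SK_1(\mathcal{O},\mathfrak{f}) \twoheadrightarrow \SK_1(\Z[i],\mathfrak{f})$: a matrix congruent to the identity modulo $\mathfrak{f}$ has all its entries in $\mathcal{O}$, so $\SL(\mathcal{O},\mathfrak{f})$ and $\SL(\Z[i],\mathfrak{f})$ coincide as groups of matrices, while $\E(\mathcal{O},\mathfrak{f}) \subseteq \E(\Z[i],\mathfrak{f})$; nontriviality of the target then pulls back to the source. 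This surjection is what the paper uses. Your endgame --- computing $\SK_1(\Z[i],4\Z[i]) \neq 1$ via power residue symbols at the ramified prime $(1+i)$ --- is correct and coincides with the paper's use of Theorem \ref{ThBMS}.
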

Observe that $\Z + 4i \Z$ naturally identifies with $\Z[x]/(x^2 + 16)$.
As outlined in the introduction, Theorem \ref{ThGMV94} immediately follows from the combination of
Proposition \ref{PropSK1neq1} with Corollary \ref{CorSK1Surjection} and the following theorem:
\begin{theorem}{\cite[Theorem 1]{BHS64}} \label{ThSK1Zx}
$\SK_1(\Z[x]) = 1$.
\end{theorem}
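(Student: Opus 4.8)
\begin{sproof}{Sketch of proof}
The plan is to reduce the computation of $\SK_1(\Z[x])$ to that of $\SK_1(\Z)$ by exploiting the good homological properties of $\Z$, and then to settle the latter by an elementary Euclidean argument. The inclusion $\iota \colon \Z \hookrightarrow \Z[x]$ and the evaluation $\varepsilon \colon \Z[x] \rightarrow \Z$, $x \mapsto 0$, satisfy $\varepsilon \circ \iota = \mathrm{id}_{\Z}$. Since $\SK_1$ is a covariant functor on commutative rings (a ring homomorphism carries $\SL_n$ into $\SL_n$ and $\E_n$ into $\E_n$), the induced homomorphisms $\iota_* \colon \SK_1(\Z) \rightarrow \SK_1(\Z[x])$ and $\varepsilon_* \colon \SK_1(\Z[x]) \rightarrow \SK_1(\Z)$ satisfy $\varepsilon_* \circ \iota_* = \mathrm{id}$. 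Hence $\iota_*$ is a split monomorphism and
$$\SK_1(\Z[x]) \cong \SK_1(\Z) \oplus N, \qquad N \Doteq \ker(\varepsilon_*).$$
It therefore suffices to show that $\SK_1(\Z) = 1$ and that the complementary summand $N$ vanishes.

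First I would dispatch $\SK_1(\Z) = 1$. Given $A \in \SL_n(\Z)$, the Euclidean algorithm lets one clear the entries of the first column by elementary row operations until a single unit remains, then clear the first row similarly; iterating on the lower right block reduces $A$ to the identity. This shows $\SL_n(\Z) = \E_n(\Z)$ for every $n \ge 1$, so that $\SK_1(\Z) = \SL(\Z)/\E(\Z) = 1$. In particular the first summand above is trivial and we are reduced to proving $N = 0$.

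The substance of the argument is exactly this vanishing. Because $\Z[x]^{\times} = \Z^{\times} = \{\pm 1\}$, the map induced by $\varepsilon$ on unit groups is an isomorphism; since the determinant splits the $\K_1$ of a commutative ring as the direct sum of its group of units and its $\SK_1$, it follows that $N$ is canonically isomorphic to the nil group $N\mathrm{K}_1(\Z) \Doteq \ker\big(\varepsilon_* \colon \K_1(\Z[x]) \rightarrow \K_1(\Z)\big)$. The fundamental theorem of Bass, Heller and Swan provides a functorial decomposition $\K_1(\Z[x]) \cong \K_1(\Z) \oplus N\mathrm{K}_1(\Z)$ together with the vanishing $N\mathrm{K}_1(R) = 0$ for every regular Noetherian ring $R$. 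As $\Z$ is a principal ideal domain, hence regular of global dimension $1$, we obtain $N\mathrm{K}_1(\Z) = 0$; therefore $N = 0$ and $\SK_1(\Z[x]) \cong \SK_1(\Z) = 1$.

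The main obstacle is precisely this last vanishing. The splitting of $\SK_1(\Z[x])$ and the computation $\SK_1(\Z) = 1$ are formal, whereas killing the nil summand genuinely uses the regularity of $\Z$ and constitutes the technical core of \cite{BHS64}: a matrix over $\Z[x]$ congruent to the identity modulo $x$ need not be visibly elementary, and showing that every such matrix is stably elementary --- equivalently, the homotopy invariance of $\K_1$ for the regular ring $\Z$ --- is where the real work lies.
\end{sproof}
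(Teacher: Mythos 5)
Your sketch is correct and takes essentially the same route as the paper, which offers no argument of its own but simply cites \cite[Theorem 1]{BHS64}: your formal reductions (splitting off $\SK_1(\Z) = 1$ via the retraction $x \mapsto 0$ and identifying the complementary summand with $N\mathrm{K}_1(\Z)$) are the standard packaging, and you correctly locate the entire substance in the Bass--Heller--Swan vanishing of $N\mathrm{K}_1$ for regular rings, which is precisely the content of the cited theorem. This is an organized citation of \cite{BHS64} rather than an independent proof, but that matches the paper exactly.
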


Our proof of Proposition \ref{PropSK1neq1} relies on

\begin{lemma} \label{LemSK1Rel}
Let $I$ be an ideal of a ring $R$ such that $R/I \simeq \Z /n \Z$ for some $n \ge 0$. 
Then the natural map 
\begin{equation} \label{EqRItoR}
\SK_1(R, I) \rightarrow \SK_1(R)
\end{equation}
induced by the inclusion $\SL(R, I) \subseteq \SL(R)$ 
is an isomorphism.
\end{lemma}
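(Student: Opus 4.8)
The plan is to read the isomorphism directly off the relative sequence \eqref{EqExact} applied to the pair $(R, I)$, using $R/I \simeq \Z/n\Z$ to annihilate the two neighbouring terms. Write $\alpha \colon \SK_1(R, I) \to \SK_1(R)$ for the map under study, so that \eqref{EqExact} reads $\K_2(R/I) \xrightarrow{\partial_1} \SK_1(R, I) \xrightarrow{\alpha} \SK_1(R) \to \SK_1(R/I)$. Exactness at $\SK_1(R)$ shows that $\alpha$ is surjective as soon as $\SK_1(\Z/n\Z) = 1$, while exactness at $\SK_1(R, I)$ shows that $\alpha$ is injective as soon as the connecting homomorphism $\partial_1$ is trivial. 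The whole proof therefore reduces to these two points.

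For surjectivity I would check that $\SK_1(\Z/n\Z) = 1$ for every $n \ge 0$. When $n \ge 1$ the ring $\Z/n\Z$ is finite, hence semilocal, and in fact by the Chinese Remainder Theorem a finite product of the local rings $\Z/p^{e}\Z$; since $\SL_m = \E_m$ over a commutative local ring and $\SK_1$ is compatible with finite products, $\SK_1(\Z/n\Z)$ vanishes. When $n = 0$ we have $\Z/n\Z = \Z$, which is Euclidean, so again $\SL_m(\Z) = \E_m(\Z)$ and $\SK_1(\Z) = 1$. In either case the last term of \eqref{EqExact} is trivial and $\alpha$ is onto.

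For injectivity I would show $\partial_1 = 0$, distinguishing the same two cases. For $n \ge 1$ this is immediate from the Dennis--Stein computation $\K_2(\Z/n\Z) = 0$, which makes the source of $\partial_1$ trivial. The case $n = 0$ is the delicate one, since $\K_2(\Z) = \Z/2\Z$ is nonzero; here I would instead extend \eqref{EqExact} one term to the left by the map $\K_2(R) \to \K_2(\Z)$ induced by $R \to \Z$. The canonical ring homomorphism $\Z \to R$ is a section of $R \to \Z$, because the composite $\Z \to R \to \Z$ is the unique endomorphism of the initial ring $\Z$, namely the identity; by functoriality $\K_2(R) \to \K_2(\Z)$ is then split surjective, and exactness at $\K_2(\Z)$ forces $\ker \partial_1 = \K_2(\Z)$, that is, $\partial_1 = 0$. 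Combining injectivity with the surjectivity above yields the desired isomorphism.

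The only genuinely delicate input is the vanishing of $\partial_1$: the naive hope that $\K_2(R/I)$ is always trivial fails precisely at $n = 0$, and the fix is the splitting argument, which exploits that $\Z$ is initial among rings. The remaining ingredients---triviality of $\SK_1$ over semilocal rings and the Dennis--Stein vanishing of $\K_2(\Z/n\Z)$---are standard, so I expect the write-up to be short once the relative sequence is invoked.
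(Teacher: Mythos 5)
Your overall strategy coincides with the paper's: run the relative sequence \eqref{EqExact} and kill both flanking terms. The surjectivity half is correct, and in fact treats $n=0$ more scrupulously than the paper does (the paper's appeal to Artinianness literally covers only $n \ge 1$). The problem is the injectivity half. The Dennis--Stein computation does \emph{not} say that $\K_2(\Z/n\Z)=0$ for all $n \ge 1$; it says that $\K_2(\Z/n\Z) \cong \Z/2\Z$, generated by the Steinberg symbol $\{-1,-1\}$, whenever $4 \mid n$, and that it vanishes otherwise. So your argument collapses exactly when $4 \mid n$, and this is not a peripheral case: the paper applies Lemma \ref{LemSK1Rel} to $R = \Z + 4i\Z$, $I = 4\Z[i]$, where $R/I \simeq \Z/4\Z$ and $\K_2(\Z/4\Z) \neq 0$. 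Nor can your $n=0$ splitting trick be transplanted to that case: a ring section of $R \to \Z/n\Z$ would force $R$ to have characteristic dividing $n$, which fails for the characteristic-zero ring $\Z + 4i\Z$.

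The repair is the paper's actual argument, which uniformly generalizes both of your cases. For every $n \ge 0$, the group $\K_2(\Z/n\Z)$ is generated by the single Steinberg symbol $\{-1+I,-1+I\}$ (for $n=0$ this is Milnor's $\K_2(\Z) = \Z/2\Z$ on $\{-1,-1\}$; for $n \ge 1$ see the Magurn exercises cited in the paper). Since $-1$ is a unit in any ring, this symbol lifts to $\{-1,-1\}_R \in \K_2(R)$; hence, by exactness of the sequence extended one term to the left by $\K_2(R)$ --- the very step you already invoke for $n=0$ --- the image of $\partial_1$ is trivial and injectivity follows, with no case distinction on $n$ and no need for a ring-theoretic section.
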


\begin{proof} 
In the exact sequence (\ref{EqExact}) introduced in Section \ref{SecSr}
$$\K_2(R/I) \xrightarrow{\partial_1} \SK_1(R, I) \rightarrow \SK_1(R) \rightarrow \SK_1(R/I)$$
the last term, namely $\SK_1(R/I)$, is trivial since $R/I$ is Artinian and hence of stable rank $1$ \cite[Corollary 10.5]{Bas64}. 
In addition, the image of $\K_2(R/I)$ in $\SK_1(R, I)$ is also trivial. 
Indeed $\K_2(R/I)$ is generated by the Steinberg symbol $\{-1 + I, -1 + I\}_{R/I}$ 
because $R/I \simeq \mathbf{Z}/n \mathbf{Z}$ \cite[Definition 12.23, Exercises 12B.6 and 13A.9]{Mag02}. 
As $\{-1, -1\}_R$ is a lift of the previous symbol in $\K_2(R)$ \cite[Exercise 12B.7]{Mag02} our claim follows from 
the definition of $\partial_1$ \cite[Theorem 13.20]{Mag02}, which completes the proof. 
\end{proof}

\begin{sproof}{Proof of Proposition \ref{PropSK1neq1}}
Let $S = \mathbb{Z}[i], I = 4S$ and $R = \Z + I$.
By Lemma \ref{LemSK1Rel} we have $\SK_1(R) \simeq \SK_1(R, I)$. 
The inclusion $R \subset S$ induces the identity map on $\SL(R, I) = \SL(S, I)$ 
and hence a surjective group homomorphism 
\begin{equation} \label{EqRItoOI}
\SK_1(R, I) \twoheadrightarrow \SK_1(S, I).
\end{equation}

As $\SK_1(S, I) \simeq \Z /2\Z$ by the Bass-Milnor-Serre Theorem \cite[Theorem 11.33]{Mag02} 
(Theorem \ref{ThBMS} below), we conclude that $\SK_1(R) \neq 1$.
\end{sproof}

\begin{remark}
As an alternative to Proposition \ref{PropSK1neq1}, one can show that 
$$\SK_1(\mathbb{Z} + 3\zeta_3 \mathbb{Z}) \neq 1, \text { where } \zeta_3 = e^{\frac{2i\pi}{3}}.$$ 
This is a direct consequence of  \cite[Lemma 3.2 and subsequent remark]{Swa71} and Lemma \ref{LemSK1Rel}.
\end{remark}

\section{Proof of Proposition \ref{PropUnstable}} \label{SecUnstable}

We shall prove that the unimodular row
$$(3, x + 1, x^2 + 16) \in \Um_3(\mathbb{Z}[x])$$ is not stable.
We apply the strategy devised in \cite[Proof of Proposition 1.9]{GMV94}: we look for an explicit matrix in $\SL_2(R)$ that defines a 
non-trivial element of $\SK_1(R)$ for a suitable quotient $R$ of $\Z[x]$. To do so, we resort to  
the Bass-Milnor-Serre Theorem \cite[Theorems 3.6 and 4.1]{BMS67} and its description of $\SK_1(S)$ for $S$ 
the ring of integers of a totally imaginary number field, in terms of power residue symbols.
The following definitions are required to state the latter theorem.

Let $m > 0$ be a rational integer. We denote by $\mu_m$ the group of $m$-th roots of unity in the field of complex numbers.
Let $S$ be the ring of integers of a number field and suppose that $S$ contains $\mu_m$ for some $m > 0$.
For $b \in S$ and $\ia$ an ideal of $S$ such that $\ia + S b m = S$,  define the \emph{$m$-th power residue symbol}
$$
\left(\frac{b}{\ia}\right)_m := \prod_{\ip \vert \ia} \left(\frac{b}{\ip}\right)_m^{\ord_{\ip}(\ia)}
$$
where $\ip$ ranges in the set of prime ideals of $S$ dividing $\ia$ and where 
$ \left(\frac{b}{\ip}\right)_m$ is the unique element of $\mu_m$ satisfying 
$$ b^{\frac{q - 1}{m}} \equiv \left(\frac{b}{\ip}\right)_m  \mod \ip$$ 
where $q$ is the number of elements in the residue field of $\ip$ 
\cite[Appendix on number theory, pages 86 and 89]{BMS67} \cite[Theorem 11.33 and Proposition 15.40]{Mag02}. 
Note that $m$ divides $q - 1$ by virtue of Lagrange's theorem.
If $\ia = S a$ with $a \in S$, we simply write $\left(\frac{b}{a}\right)_m$.
The latter symbol depends on $b$ only modulo $a$ and is evidently multiplicative in $b$, i.e., we have 
\begin{equation} \label{EqPowerMult}
\left(\frac{bc}{a}\right)_m = \left(\frac{b}{a}\right)_m  \left(\frac{c}{a} \right)_m
\end{equation}
for every $b, c \in S$ such that $Sa + Sbcm = S$.

The following computation will come soon in handy.
\begin{lemma} \label{LemPowerRes}
Let $\ip$ be the principal ideal of $\Z[i]$ generated by $1 + 4i$. Then $\ip$ is prime, $12 \notin \ip$ and we have  
$\left(\frac{12}{1 + 4i}\right)_2 = -1$. 
\end{lemma}

\begin{proof}
As the norm $\operatorname{N}_{\Q(i)/\Q}(1 + 4i)$ is $17$, 
the ideal $\ip$ is prime and 
$\Z[i] /\ip$ is the field with $17$ elements. Clearly, we have $12 \notin \ip$ so 
that $\left(\frac{12}{1 + 4i}\right)_2$ is well-defined.
It follows immediately from the multiplicative law (\ref{EqPowerMult}) that 
$\left(\frac{12}{1 + 4i}\right)_2 = \left(\frac{3}{1 + 4i}\right)_2$.
  
Observing that $3^{\frac{17 - 1}{2}} = 3^8 \equiv -1 \mod 17$, we infer that $\left(\frac{3}{1 + 4i}\right)_2 = -1$, 
which completes the proof.
\end{proof}

\begin{theorem} \cite[Theorems 3.6 and 4.1]{BMS67} \label{ThBMS}
Let $S$ be the ring integers of a totally imaginary number field. 
Let $m = m(S)$ denote the number of roots of unity in $S$. If $I$ is a non-zero ideal of $S$, define the divisor $r = r(I)$ of $m$ by 
$$\ord_p(r) = j_p(I),  \text{ for every prime divisor } p \text{ of } m,$$ where $ j_p(I)$ is the nearest integer in the interval 
$\lbrack 0, \ord_p(m) \rbrack$ to 
$$\min_{\ip \vert p} \left\lfloor \frac{\ord_{\mathfrak{p}}(I)}{\ord_{\ip}(S p)} - \frac{1}{p - 1} \right\rfloor$$ 
(where $\lfloor x \rfloor$ denotes the greatest integer $\le x$ and $\ip$ ranges over the prime ideals of $S$ containing $p$).

Then the map $\begin{pmatrix} a & b \\ * & *\end{pmatrix} \in \SL_2(S, I) \mapsto (\frac{b}{a})_r$ induces an isomorphism from $\SK_1(S, I)$ onto $\mu_r$.
\end{theorem}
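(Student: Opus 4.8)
The plan is to realize every class of $\SK_1(S, I)$ by a Mennicke symbol and then to identify that symbol with the power residue symbol $\left(\frac{b}{a}\right)_r$ through the reciprocity law of class field theory. Throughout I would pass to the stable groups but carry out the matrix manipulations in $\SL_n(S, I)$ with $n \ge 3$, where the elementary subgroup $\E_n(S, I)$ is normal in $\SL_n(S)$ and the relevant stability is available; each class of $\SK_1(S, I)$ is then represented by the first row $(a, b)$ of a block $\begin{pmatrix} a & b \\ * & * \end{pmatrix}$, which is exactly the datum entering the symbol. The totally imaginary hypothesis enters decisively at the end: it guarantees that the archimedean places contribute trivially to the reciprocity product, so that the full congruence kernel is the group $\mu_m$ of roots of unity inside which $\mu_r$ sits.

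First I would show that $\SK_1(S, I)$ is generated by Mennicke symbols and determine their relations. To a pair $(a, b)$ with $a \equiv 1 \bmod I$, $b \equiv 0 \bmod I$ and $Sa + Sb = S$ one attaches the class $[a, b] \in \SK_1(S, I)$ of the completed matrix $\begin{pmatrix} a & b \\ * & * \end{pmatrix}$. Using the normality of $\E_n(S, I)$ for $n \ge 3$ together with a column-reduction argument, any element of $\SL(S, I)$ can be brought, modulo $\E(S, I)$, into such a $2 \times 2$ shape, so these symbols exhaust $\SK_1(S, I)$. By explicit elementary operations in $\SL_3(S, I)$—which is why one must work above size $2$—one then checks that $[a, b]$ is independent of the chosen completion, multiplicative in each entry under the $I$-congruences, and unchanged when $b$ is altered by a multiple of $a$. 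This presents $\SK_1(S, I)$ as a quotient of a universal Mennicke group attached to $(S, I)$, reducing the problem to computing that group.

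The decisive step is the identification of this universal symbol with the power residue symbol. One first checks that $\left(\frac{b}{a}\right)_r$, restricted to pairs obeying the $I$-congruences, satisfies the Mennicke relations; this produces a surjection from the Mennicke group onto $\mu_r$. The converse—that $[a, b]$ retains no information beyond its residue symbol—is the heart of \cite{BMS67} and rests on the power reciprocity law: the product formula for the local norm-residue symbols, together with the vanishing of the real contributions in the totally imaginary case, forces the global Mennicke symbol to factor through $\left(\frac{b}{a}\right)_r$. The exact modulus $r = r(I)$ then emerges from a purely local analysis at each prime $p \mid m$: one determines the largest power of the local roots of unity still detected after imposing congruence modulo $I$, and this computation yields the floor expression defining $j_p(I)$ and hence $\ord_p(r)$.

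The main obstacle is plainly the penultimate step. Verifying that the power residue symbol satisfies the Mennicke relations is formal, but proving completeness—that $\SK_1(S, I)$ is no larger than $\mu_r$—requires the full reciprocity machinery of class field theory and delicate control of the primes dividing $m$, in particular the wildly ramified ones, which is exactly the technical core of the Bass–Milnor–Serre theorem and the reason its proof in \cite{BMS67} is substantial.
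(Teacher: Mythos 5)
The paper contains no proof of this statement to compare against: Theorem \ref{ThBMS} is imported wholesale from \cite{BMS67} (their Theorems 3.6 and 4.1) and used as a black box in Proposition \ref{PropSK1neq1} and in Section \ref{SecUnstable}. Measured against the original Bass--Milnor--Serre argument, your outline reproduces its architecture faithfully: stabilize to $\SL_n(S, I)$ with $n \ge 3$ where $\E_n(S, I)$ is normal and row reduction is available; represent every class of $\SK_1(S, I)$ by a Mennicke symbol $[a, b]$ with $a \equiv 1$, $b \equiv 0 \bmod I$; verify the Mennicke relations by explicit manipulations in size $\ge 3$; and identify the resulting universal Mennicke group with $\mu_r$ via the power reciprocity law, total imaginarity ensuring the archimedean places contribute nothing. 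This is indeed how \cite{BMS67} proceeds (their Theorem 3.6 reduces $\SK_1(S,I)$ to the universal Mennicke group of the pair $(S,I)$; their Theorem 4.1 computes that group as $\mu_r$), and your remark about the congruence kernel being $\mu_m$ in the totally imaginary case is also accurate.

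As a proof, however, your text has a genuine gap, one you flag yourself: the two decisive steps are named rather than carried out. First, completeness --- that $[a, b]$ retains no information beyond $\left(\frac{b}{a}\right)_r$ --- is where \cite{BMS67} spends the bulk of its effort (Artin reciprocity together with delicate local work at the wildly ramified primes dividing $m$), and nothing in your sketch substitutes for it; ``rests on the power reciprocity law'' is a pointer, not an argument. Second, the exact modulus $r(I)$, i.e.\ the floor formula defining $j_p(I)$, is presented as the outcome of ``a purely local analysis'' that is never performed, and this value is not cosmetic: the application in Section \ref{SecUnstable} hinges on $r(4\Z[i]) = 2$ so that the symbol $\left(\frac{12}{1+4i}\right)_2 = -1$ detects a non-trivial class. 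Within the economy of this note the correct move is exactly what the paper does --- cite \cite{BMS67} rather than reprove it --- so your submission should either do the same explicitly or supply the reciprocity and local computations it currently only gestures at.
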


The first item of the next proposition was implicit in the above theorem. 

\begin{proposition} \cite[Lemma 11.24 and Proposition 11.25]{Mag02} \label{PropMennickSymbol}
Let $R$ be a ring and let $I$ be an ideal of $R$.
\begin{itemize} 

\item[$(i)$] If two matrices of $\SL_2(R, I)$ 
have the same first row, then they represent the same element of $\SK_1(R, I)$.
For $A = \begin{pmatrix}
a & b \\
* & *
\end{pmatrix} \in \SL_2(R, I)$, define
$$[a, b]_I = A \cdot \E(R, I) \in \SK_1(R, I).$$

\item[$(ii)$]Let $(a, b) \in \Um_2(R)$ with $a \in 1 + I$ and $b \in I$. 
Then $(a, b)$ is the first row of some matrix in $\SL_2(R, I)$.

\item[$(iii)$] If $(a, b')$ is the first row of a matrix in $\SL_2(R, I)$, then we have 
$$[a, bb']_I = [a, b]_I[a, b']_I.$$
\end{itemize}
\end{proposition}

\begin{remark} \label{RemError}
Set $S = \mathbb{Z}[\sqrt{-5}]$ and $I = 2S$. 
The statement \cite["$f = 2$ has property ($\ast$)" on page 191]{GMV94} implies that 
$\SK_1(S, I) \simeq \mu_2$, which is false. 
Indeed, we have $m(S) = 2$ and it follows from Theorem \ref{ThBMS} 
that $r(I) = 1$ and hence $\SK_1(S, I) = 1$. 
\end{remark}

As a stepping stone to Proposition \ref{PropUnstable}, we shall establish:

\begin{proposition} \label{PropUnstableIntermediate}
The unimodular row $(12, x + 1, x^2 + 16)$ 
of $\Z[x]$ is not stable.
\end{proposition}

For the remainder of this section, we set 
$$S = \mathbb{Z}[i], I =  4S, \text{ and } R = \Z + I,$$
with a view to apply Theorem \ref{ThBMS} and Proposition \ref{PropMennickSymbol}.
The reason why we first consider the row $(12, x + 1, x^2 + 16)$, and not $(3, x + 1, x^2 + 16)$,
is that $[1 + 4i, 12]_I$ is well-defined since $12 \in I$ whereas $3 \notin I$.

\begin{sproof}{Proof of Proposition \ref{PropUnstableIntermediate}}
By Theorem \ref{ThBMS}, we have $r(I) = 2$. 
By Theorem \ref{ThBMS} and Proposition \ref{PropMennickSymbol}.$ii$, the power residue symbol 
$\binom{12}{1 + 4i}_2$ is the image of $A \cdot \operatorname{E}(S, I) \in \SK_1(S, I)$ for some matrix  
$A = \begin{pmatrix} 1 + 4i & 12 \\ * & *\end{pmatrix} \in \SL_2(S, I)$. 
By Lemma \ref{LemPowerRes}, we have $A \cdot \operatorname{E}(S, I) \neq 1$.
The matrix $A$ can be lifted from $\SK_1(S, I)$ to  $\SK_1(R, I)$ via (\ref{EqRItoOI}), 
and certainly maps to a non-trivial element of $\SK_1(R)$ via the isomorphism (\ref{EqRItoR}). 

Considering the surjective ring homomorphism from $\Z[x]$ onto $R$ induced by $x \mapsto 4i$, 
we infer from Theorem \ref{ThSK1Zx} and 
Lemma \ref{LemStableRow} that the row $(1 + x, 12, x^2 + 16)$ cannot be stable. This trivially implies the result. 
\end{sproof}

\begin{sproof}{Proof of Proposition \ref{PropUnstable}}
Let $\gamma = [1 + 4i, 12]_{I} [1 + 4i, 4]_{I}^{-1} \in \SK_1(R, I)$. 
Thanks to Proposition \ref{PropMennickSymbol}.$iii$, we observe that 
$\gamma$ is the image of $[1 + 4i, 3]_{R} \in \SK_1(R, R) = \SK_1(R)$ by the inverse of the isomorphism (\ref{EqRItoR}). 
Mapping $\gamma$ to $\SK_1(S, I)$ via (\ref{EqRItoOI}), we obtain again, by means of  Theorem \ref{ThBMS}, 
a non-trivial element of $\SK_1(S, I)$ since $\left(\frac{4}{1 + 4i}\right)_2 = 1$.
Reasoning as in the proof of  Proposition \ref{PropUnstableIntermediate}, we conclude that $(3, x + 1, x^2+16)$ is not stable.
\end{sproof}


\bibliographystyle{alpha}
\bibliography{Biblio}

\end{document}